\documentclass[11pt]{article}
\usepackage[a4paper,margin=1in]{geometry}
\usepackage[T1]{fontenc}
\usepackage[utf8]{inputenc}
\usepackage{lmodern}
\usepackage{amsmath,amssymb,amsthm,mathtools}
\usepackage{microtype}
\usepackage[colorlinks=true,linkcolor=blue,citecolor=blue,urlcolor=blue]{hyperref}

\newtheorem{theorem}{Theorem}
\newtheorem{lemma}{Lemma}
\newtheorem{remark}{Remark}

\newcommand{\Zi}{\mathbb Z[i]}
\newcommand{\Z}{\mathbb Z}
\newcommand{\Q}{\mathbb Q}
\newcommand{\N}{\mathbb N}
\newcommand{\Squares}{\Box}
\newcommand{\calY}{\mathcal Y}

\title{An AI Proof of 18-Variable Undecidability for Diophantine Equations over $\mathbb Z[i]$}
\author{Yuchen Ding and Junfeng Li}
\date{}

\begin{document}
\maketitle

\begin{abstract}
This paper presents an AI proof that there is no algorithm deciding whether a polynomial equation over the Gaussian integers in $18$ unknowns has a solution.  The proof improves the $20$-unknown theorem of Matiyasevich and Sun.  It follows their rationality criterion and integer test, but saves two variables: the clearing-denominator variable is avoided by imposing two integer conditions, and the remaining nonzero condition is absorbed into the relation-combining lemma by the one-variable gadget $(2R+1)(3R+1)$.
\end{abstract}

\noindent\textbf{2020 Mathematics Subject Classification.} Primary 11U05, 03D35; Secondary 03D25, 11D99, 11R11.

\medskip
\noindent\textbf{Key words and phrases.} Hilbert's Tenth Problem, Diophantine equations, Gaussian integers, undecidability, fixed number of variables, AI proof.

\section{Introduction}

Hilbert's Tenth Problem over a commutative ring asks for an algorithm deciding whether a polynomial equation over that ring has a solution.  Over $\Z$ the answer is negative, by the theorem of Matiyasevich, Davis, Putnam and Robinson; see \cite{DPR1961,Matiyasevich1970,MatiyasevichBook}.  A natural refinement asks how many unknowns are needed for undecidability.  We shall use a fixed-variable form over $\Z$ due to Sun \cite{Sun2021}.
Let $\mathbb Z[i]$ denote the ring of Gaussian integers.  Matiyasevich and Sun proved that undecidability over $\Zi$ holds for equations with $20$ unknowns \cite{MS20}.  Their proof starts from Sun's fixed-variable theorem, uses a Diophantine test for rational integers inside $\Zi$, and then applies a rationality criterion over $\mathbb Q(i)$.

The AI proof keeps the same framework, but makes two small changes.  For variables $Z_1,\ldots,Z_{10}$ put
\[
\calY=2\prod_{k=1}^{10}(3Z_k+1).
\]
Instead of adding a new variable to clear denominators, we impose
\[
\calY^{10}\in\Z,
\qquad
\calY^{11}+\sum_{k=1}^{10}Z_k\calY^{10-k}\in\Z.
\]
Since $\calY\ne0$, these conditions give
\[
\calY+\sum_{k=1}^{10}\frac{Z_k}{\calY^k}\in\Q,
\]
and the rationality criterion of \cite{MS20} then forces all $Z_k$ to be rational integers.

The second change concerns the remaining nonzero condition.  We use
\[
Q(R)=(2R+1)(3R+1).
\]
This polynomial never vanishes on $\Zi$, and for every nonzero rational integer $n$ there is an integer $R$ with $n\mid Q(R)$.  Thus the nonzero condition for $Z_{10}V_0V_1$ can be put into the relation-combining lemma without introducing the extra variable used in the earlier construction.

The variables are therefore
\[
Z_1,\ldots,Z_{10},\ M,R,\ V_0,X_0,Y_0,\ V_1,X_1,Y_1,
\]
namely $10+1+1+3+3=18$ variables.  The proof gives the following theorem.

Throughout the paper
\[
\N=\{0,1,2,\ldots\},
\]
and $\Z$ and $\Q$ are regarded as subrings of $\Zi$ and $\Q(i)$ in the usual way.

\begin{theorem}\label{thm:main}
There is no algorithm to decide, for an arbitrary polynomial
\[
F(X_1,\ldots,X_{18})\in\Z[X_1,\ldots,X_{18}],
\]
whether the equation
\[
F(X_1,\ldots,X_{18})=0
\]
has a solution in $\Zi^{18}$.
\end{theorem}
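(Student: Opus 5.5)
The plan is to reduce Sun's fixed-variable undecidability theorem over $\Z$ \cite{Sun2021} to equations over $\Zi$ in $18$ unknowns.

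\textbf{Starting point.} Take an undecidable family of equations $P(z_1,\ldots,z_{10})=0$ over $\Z$. I assume Sun's result can be used in the form where the relevant unknowns range over $\Z$ (or over $\N$, converted to $\Z$ with the usual care). The target is a polynomial $F$ in
\[
Z_1,\ldots,Z_{10},M,R,V_0,X_0,Y_0,V_1,X_1,Y_1
\]
such that $F=0$ is solvable in $\Zi^{18}$ if and only if $P=0$ is solvable over $\Z$. Since $P\mapsto F$ will be effective, Theorem~\ref{thm:main} follows.

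\textbf{Ingredients from \cite{MS20}.}
\begin{enumerate}
\item An integer test: a Diophantine condition in three Gaussian unknowns $(V,X,Y)$, presumably Pell-type, expressing that a given element $A\in\Zi$ lies in $\Z$. It should have the shape ``some polynomial in $A,V,X,Y$ vanishes, together with an auxiliary element being nonzero.''
\item The rationality criterion over $\Q(i)$: if $\calY+\sum_{k} Z_k\calY^{-k}\in\Q$ with $\calY=2\prod(3Z_k+1)$, then all $Z_k\in\Z$.
\end{enumerate}

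\textbf{Step 1: the two integer conditions.} Apply the integer test twice:
\begin{itemize}
\item to $A_0=\calY^{10}$, using $V_0,X_0,Y_0$;
\item to $A_1=\calY^{11}+\sum_k Z_k\calY^{10-k}$, using $V_1,X_1,Y_1$.
\end{itemize}
Since $3Z_k+1\neq0$ on $\Zi$, we have $\calY\ne0$. Then $A_1/A_0=\calY+\sum_k Z_k\calY^{-k}$ is rational, and the criterion gives $Z_k\in\Z$.

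Conversely, if $Z_k\in\Z$, then $A_0,A_1\in\Z$, and the integer test has witnesses.

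\textbf{Step 2: imposing $P=0$.} Add the condition $P(Z_1,\ldots,Z_{10})=0$. Possibly this is done through an auxiliary variable $M$ if the reduction needs some bound or non-negativity encoding, for example $z_k\in\N$ written via sums of squares. I would fit $M$ in wherever the MS20 integer test needs a shared parameter.

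\textbf{Step 3: combining into one equation.} Use the relation-combining lemma. Over $\Zi$ a finite conjunction of equations $E_j=0$ can be compressed, for example by a norm-form or an $E_1^2+ \lambda E_2^2$ type combination that has no nontrivial zeros in $\Q(i)$. The nonzero requirement on $Z_{10}V_0V_1$ (or whichever auxiliary quantities) is handled as a divisibility $n\mid Q(R)$ with $Q(R)=(2R+1)(3R+1)$.

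\begin{itemize}
\item \emph{Soundness.} $Q$ never vanishes on $\Zi$: $2R+1=0$ and $3R+1=0$ have no Gaussian integer roots. So an equation of the form $Q(R)=n\cdot W$ forces $n\ne0$. Here $W$ is expressed through $M$, or the condition is folded as $Q(R)\cdot(\text{stuff})$ in the combining lemma.
\item \emph{Completeness.} For a nonzero integer $n$, write $n=2^a3^b m$ with $\gcd(m,6)=1$. By CRT choose $R$ with $3R+1\equiv0 \pmod{2^a}$, $2R+1\equiv0 \pmod{3^b}$, and $2R+1\equiv0\pmod m$. Then $n\mid Q(R)$.
\end{itemize}

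The main obstacle, where care is needed, is checking that the MS20 integer test's nonzero condition is exactly of the form ``an integer (after Step 1) is nonzero.'' Only then does the divisibility trick apply, because it works for rational integers and not for general Gaussian integers. I would first verify that $Z_{10}V_0V_1$ lies in $\Z$ once the other conditions hold. If it does not, I would instead take norms, $n\bar n$, which are rational integers.

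Finally, count the unknowns to confirm there are exactly $18$: $10$ for the $Z_k$, then $M$, $R$, and the two triples $(V_0,X_0,Y_0)$ and $(V_1,X_1,Y_1)$.
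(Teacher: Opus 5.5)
There is a genuine gap, and it is exactly where the variable count is decided. The integer test of Lemma~\ref{lem:integer-test} is not ``one equation plus a nonzero condition'' in $V,X,Y$. Besides $E=0$ and $V\ne0$ it requires the square condition $3Y^2(2U+1-XY)^2+1\in\Squares$, and the test fails without it: using integer solutions of $x^2-3y^2=1$, one checks that $E=0$ with $V\ne0$ is solvable for every $U\in\Zi$. Stated as equations, the two square conditions cost two square-root witnesses, and the divisibility $N\mid Q(R)$ costs a quotient. So with the tests stated correctly your scheme needs $10+6+1+3=20$ unknowns: the $Z_k$, the six test variables, $R$, and three witnesses. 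Your count of $18$ comes out only because the square conditions were omitted and $M$ served as the quotient in $Q(R)=NM$. Your Step~3 describes the conjunction trick of Lemma~\ref{lem:conjunction}. That trick merges equations without new variables, but it cannot absorb existential witnesses, so it does not close this gap.

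The missing idea is the relation-combining Lemma~\ref{lem:combining}. For $A_1\ne A_2$ and $S\ne0$, the conjunction $A_1\in\Squares\wedge A_2\in\Squares\wedge S\mid T$ is equivalent to the existence of $M\in\Zi$ with $(T-MS)^4-2(A_1+A_2)S^2(T-MS)^2+(A_1-A_2)^2S^4=0$. This is what $M$ is for: one variable replaces all three witnesses. Using it needs two further observations that your plan does not anticipate.
\begin{enumerate}
\item $A_1\ne A_2$ must hold identically. The paper takes $A_1=9B_0$ and $A_2=B_1$, which differ modulo the Gaussian prime $3$; moreover $9B_0\in\Squares\iff B_0\in\Squares$ because $3$ is prime in $\Zi$.
\item The lemma presupposes $S\ne0$, so the equation must enforce it. Setting $S=0$ gives $f=T^4$. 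With $S=Z_{10}V_0V_1$ and $T=Q(R)\ne0$, the single equation therefore forces $Z_{10},V_0,V_1\ne0$. This is where your $Q(R)$ gadget actually enters.
\end{enumerate}

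Two smaller points. First, your starting point must be Sun's form \eqref{eq:sun}, including the condition $z_{10}\ne0$. Otherwise putting $Z_{10}$ into $S$ would lose solutions with $z_{10}=0$. Second, your worry about whether $N\in\Z$ is unnecessary. Soundness uses only $Q(R)\ne0$ on $\Zi$. For completeness, the $V_j$ can be chosen in $\Z$ by the last clause of Lemma~\ref{lem:integer-test}; alternatively, $n\mid n\bar n$ lets you choose $R$ for a Gaussian $n$. What you cannot do is take norms inside the polynomial, since conjugation is not polynomial.
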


{\bf Acknowledgement.} We thank Prof. Z.-W. Sun for his interest in this article.

\section{Auxiliary facts}

Let
\[
\Squares=\{\xi^2:\xi\in \Zi\}.
\]
We shall use the following facts from \cite{MS20}.

\begin{lemma}[conjunction over $\Zi$]\label{lem:conjunction}
This is \cite[Lemma~3.2]{MS20}. For any $X,Y\in \Zi$,
\[
X=0\ \wedge\ Y=0
\quad\Longleftrightarrow\quad
X^2+2Y^2=0.
\]
Consequently, any finite system of polynomial equations over $\Zi$ can be combined into a single polynomial equation, without introducing new variables.  In particular, if the original equations have coefficients in $\Z$, then the combined equation also has coefficients in $\Z$.
\end{lemma}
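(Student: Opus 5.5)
The plan is to prove the equivalence directly and then get the combination statement by induction on the number of equations. The direction ``$X=0\wedge Y=0 \Rightarrow X^2+2Y^2=0$'' is immediate. For the converse, suppose $X^2+2Y^2=0$ with $X,Y\in\Zi$. If $Y=0$ then $X^2=0$, so $X=0$ because $\Zi$ is an integral domain. If $Y\neq0$, we work in the field $\Q(i)$ and obtain $(X/Y)^2=-2$. Then $\sqrt{-2}\in\Q(i)$, and this is what must be ruled out.

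To rule it out, I would use the fact that $\Q(i)$ and $\Q(\sqrt{-2})$ are distinct quadratic fields. Concretely, I would write $X/Y=a+bi$ with $a,b\in\Q$ and expand: $a^2-b^2+2abi=-2$. This gives $ab=0$ and $a^2-b^2=-2$. If $b=0$, then $a^2=-2$, which is impossible for rational $a$. If $a=0$, then $b^2=2$, which is impossible since $\sqrt2$ is irrational. So $Y=0$, and hence $X=0$. This irrationality check is the only substantive step. It is elementary, so I do not expect a real obstacle. One could instead argue with norms, $N(X)=2N(Y)$ together with a parity or $2$-adic argument, but the rational-parts computation is cleaner.

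For the consequence, given polynomials $P_1,\dots,P_m$, I define recursively $S_1=P_1$ and $S_{j+1}=S_j^2+2P_{j+1}^2$. By the equivalence and induction on $j$, for every point of $\Zi^n$ we have $S_m=0$ if and only if $P_1=\dots=P_m=0$. No new variables appear, since $S_m$ is a polynomial in the same unknowns. Because the ring operations used (squaring, multiplying by $2$, adding) preserve $\Z[X_1,\dots,X_n]$, coefficients in $\Z$ stay in $\Z$.
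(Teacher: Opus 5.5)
Your proof is correct and complete: the reduction of $X^2+2Y^2=0$ with $Y\neq0$ to $\sqrt{-2}\in\Q(i)$ (ruled out by your rational-parts computation) settles the equivalence, and the recursion $S_{j+1}=S_j^2+2P_{j+1}^2$ handles the consequence while keeping the same variables and coefficients in $\Z$. The paper gives no argument of its own here; it imports the lemma from \cite[Lemma~3.2]{MS20}, and what you wrote is the standard self-contained proof behind that citation.
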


\begin{lemma}[combining two square conditions and one divisibility condition]\label{lem:combining}
This is \cite[Lemma~3.1]{MS20}. Let $A_1,A_2,S,T\in \Zi$ with $A_1\ne A_2$ and $S\ne0$.  Define
\[
f(A_1,A_2,S,T,M)
=(T-MS)^4-2(A_1+A_2)S^2(T-MS)^2+(A_1-A_2)^2S^4.
\]
Then
\[
A_1\in\Squares\ \wedge\ A_2\in\Squares\ \wedge\ S\mid T
\]
if and only if there exists $M\in\Zi$ such that
\[
f(A_1,A_2,S,T,M)=0.
\]
\end{lemma}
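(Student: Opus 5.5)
The plan is to factor $f$ as a polynomial in the single quantity $W=T-MS$ over an algebraic closure of $\Q(i)$, and then use that $\Zi$ is integrally closed in $\Q(i)$.

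\textbf{Factorization.} Fix square roots $a,b$ of $A_1,A_2$ in an algebraic closure of $\Q(i)$, so $a^2=A_1$ and $b^2=A_2$. Then
\[
\bigl(W^2-S^2(a+b)^2\bigr)\bigl(W^2-S^2(a-b)^2\bigr)
=W^4-S^2W^2\bigl((a+b)^2+(a-b)^2\bigr)+S^4(a^2-b^2)^2 ,
\]
and $(a+b)^2+(a-b)^2=2(A_1+A_2)$, $a^2-b^2=A_1-A_2$. Hence
\[
f(A_1,A_2,S,T,M)=\prod_{\epsilon,\delta=\pm1}\bigl(W-S(\epsilon a+\delta b)\bigr),\qquad W=T-MS .
\]

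\textbf{Forward direction.} Suppose $A_1=a^2$ and $A_2=b^2$ with $a,b\in\Zi$, and $T=SK$ with $K\in\Zi$. Put $M=K-a-b\in\Zi$. Then $W=T-MS=S(a+b)$, which kills one factor of the product, so $f=0$.

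\textbf{Converse.} Suppose $f=0$ for some $M\in\Zi$. Since $S\ne0$, set $w=W/S=T/S-M\in\Q(i)$. Some factor of the product vanishes, so $w=\epsilon a+\delta b$. Replacing $a$ by $\epsilon a$ and $b$ by $\delta b$ (still square roots of $A_1,A_2$), we may assume $w=a+b$.

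\begin{itemize}
\item First, $w\neq0$. Otherwise $a=-b$, so $A_1=A_2$, contradicting the hypothesis. This is the only place $A_1\ne A_2$ is used, and it is the key point.
\item From $b=w-a$ we get $A_2=w^2-2wa+A_1$. Since $w\ne0$, this gives
\[
a=\frac{w^2+A_1-A_2}{2w}\in\Q(i).
\]
\item Now $a$ is a root of the monic polynomial $x^2-A_1\in\Zi[x]$. Since $\Zi$ is a UFD, hence integrally closed, $a\in\Zi$, so $A_1\in\Squares$.
\item Likewise $b=w-a\in\Q(i)$ is integral over $\Zi$, so $b\in\Zi$ and $A_2\in\Squares$.
\item Finally $T/S=w+M=a+b+M\in\Zi$, so $S\mid T$.
\end{itemize}

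The only delicate step is the non-vanishing of $w$, which is exactly why the hypothesis $A_1\ne A_2$ is needed. Everything else is the factorization identity and integral closedness of $\Zi$.
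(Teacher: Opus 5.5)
The paper gives no proof of this lemma: it imports it verbatim from \cite[Lemma~3.1]{MS20}, so the only comparison is between citing the result and proving it. Your argument is a correct, self-contained proof.

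\begin{itemize}
\item The identity $f=\prod_{\epsilon,\delta=\pm1}\bigl(W-S(\epsilon a+\delta b)\bigr)$ with $W=T-MS$ checks out.
\item The choice $M=K-a-b$ settles the forward direction. As your argument shows, this direction uses neither $S\ne0$ nor $A_1\ne A_2$.
\item In the converse, $S\ne0$ lets you pass to $w=T/S-M\in\Q(i)$. Then $A_1\ne A_2$ gives $w\ne0$, hence $a-b=(A_1-A_2)/w\in\Q(i)$, so $a,b\in\Q(i)$. Integral closedness of $\Zi$ finishes.
\end{itemize}

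This is the standard mechanism behind relation-combining lemmas of Matiyasevich type. So you have not taken a genuinely new route; you have supplied the proof the paper outsources.

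What your version buys is transparency about exactly where the two hypotheses enter, which matters here. The paper's second variable saving applies the lemma with $S=Z_{10}V_0V_1$ and $T=Q(R)$. It checks $C_0\ne C_1$ for every assignment, and it derives $S\ne0$ separately from $f(C_0,C_1,0,Q(R),M)=Q(R)^4$. Your proof confirms that these are precisely the hypotheses the converse direction needs.
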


\begin{lemma}[integer test in $\Zi$]\label{lem:integer-test}
This is \cite[Lemma~3.3]{MS20}. For $U\in\Zi$, we have $U\in\Z$ if and only if there exist $V,X,Y\in\Zi$ with $V\ne0$ such that
\begin{equation}\label{eq:int-test-one}
4\bigl(2V(2(2U+1)^2+1)-Y\bigr)^2-3Y^2-1=0
\end{equation}
and
\begin{equation}\label{eq:int-test-two}
3Y^2(2U+1-XY)^2+1\in\Squares.
\end{equation}
Moreover, if $U\in\Z$, then $V,X,Y$ may be chosen in $\Z$.
\end{lemma}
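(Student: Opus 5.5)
The plan is to read both conditions as Pell equations for $d=3$ over $\Zi$ and to use the rigid structure of their solutions. Put $a=2V(2(2U+1)^2+1)$. Then \eqref{eq:int-test-one} reads $(2(a-Y))^2-3Y^2=1$. Likewise \eqref{eq:int-test-two}, with $s^2=3Y^2(2U+1-XY)^2+1$, reads $s^2-3(Y(2U+1-XY))^2=1$.

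\textbf{Step 1: solutions of the Pell equation over $\Zi$.} First I would classify all solutions $(x,y)\in\Zi^2$ of $x^2-3y^2=1$. The element $x+y\sqrt3$ is a unit of $\Zi[\sqrt3]\subseteq\Z[\zeta_{12}]$ whose relative norm to $\Q(i)$ equals $1$. By Dirichlet's theorem $\Q(\zeta_{12})$ has unit rank $1$, and one checks that a fundamental unit may be taken to be $2+\sqrt3$, up to roots of unity. Imposing the norm condition and the requirement $x,y\in\Zi$ removes the roots of unity except $\pm1$. The goal of this step is that every solution satisfies $x+y\sqrt3=\pm(2+\sqrt3)^n$ with $n\in\Z$, so $x,y\in\Z$. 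This step is the main obstacle. It needs care about the index of $\Zi[\sqrt3]$ in the full ring of integers, and about possible units such as $\zeta_{12}^k\varepsilon^m$ that might have relative norm $1$ yet give coordinates in $i\Z$. I would check these by hand using the finitely many roots of unity. If genuine solutions with $y\in i\Z$ do survive, they would have to be excluded by the parity structure built into \eqref{eq:int-test-one}.

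\textbf{Step 2: the conditions force $U\in\Z$.} Applying Step 1 to \eqref{eq:int-test-one} gives $Y\in\Z$ and $a\in\Q\cap\Zi=\Z$. Since $V\neq0$, the element $a$ is a nonzero multiple of $2(2U+1)^2+1$. Applying Step 1 to \eqref{eq:int-test-two} gives $W:=Y(2U+1-XY)\in\Z$. Now $Y\neq0$, since $Y=0$ makes \eqref{eq:int-test-one} read $16a^2=1$, which is impossible. Hence $2U+1-XY=W/Y\in\Q(i)$ is rational, and therefore $2U+1\equiv W/Y \pmod{Y\Zi}$. Writing $U=u_1+iu_2$, this says $2u_2$ is divisible by $Y$ in $\Z$.

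It remains to show that $|Y|$ is too large for this unless $u_2=0$. Because $(2(a-Y),Y)$ is a Pell solution, $2(a-Y)\approx\pm\sqrt3\,Y$, so $|a|\asymp|Y|$. Since $a$ is a nonzero multiple of $2(2U+1)^2+1$, we get $|Y|\gg|2U+1|^2>2|u_2|$ when $U\notin\Z$. This forces $u_2=0$, so $U\in\Z$.

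\textbf{Step 3: converse.} Let $U\in\Z$ and set $m=2(2U+1)^2+1$. Using the periodicity of Pell solutions modulo $m$, I would choose $n$ with $y_n\equiv x_n/2 \pmod{2m}$ in a suitable sense. This lets me solve $2(2Vm-Y)=x_n$ with $Y=y_n$ and an integer $V\neq0$.

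For \eqref{eq:int-test-two}, I would pick $X\in\Z$ so that $2U+1-XY$ equals the $y$-coordinate of a Pell solution divided by $Y$. The natural choice is $Y\mid y_{kn}$, since $y_n\mid y_{kn}$. The divisibility $y_n\mid y_{kn}$ and the congruence $y_{kn}/y_n\equiv k\,x_n^{k-1}\pmod{y_n}$ let me hit the residue $2U+1$ modulo $Y$, provided $\gcd$ conditions hold, which I would arrange through the choice of $n$. This construction keeps $V,X,Y$ in $\Z$, as the statement requires.
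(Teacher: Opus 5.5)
The paper gives no proof of this lemma; it is quoted from \cite[Lemma~3.3]{MS20}, so I assess your outline on its own terms. The architecture is sound, and Step~2 is correct. From \eqref{eq:int-test-one} and \eqref{eq:int-test-two} you get $Y\in\Z\setminus\{0\}$, $2Vm\in\Z$ with $m=2(2U+1)^2+1$, and $2U+1-XY\in\Z$, hence $Y\mid 2\,\mathrm{Im}\,U$. Moreover $4|V||m|\le(2+\sqrt3)|Y|+1$, $|V|\ge1$ and $|m|\ge1+8(\mathrm{Im}\,U)^2$ give $|Y|>2|\mathrm{Im}\,U|$, so $\mathrm{Im}\,U=0$. (For $Y=0$ the equation reads $4a^2=1$, not $16a^2=1$; this is harmless.)

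Step~3 can be completed without any gcd conditions. Let $r\ge2$ be the order of $2+\sqrt3$ modulo $4m$ in $\Z[\sqrt3]$ and $n=r-1$. Then $(x_n,y_n)\equiv(2,-1)\pmod{4m}$, so $Y=y_n$ and $V=(x_n+2y_n)/(4m)\ge1$ solve \eqref{eq:int-test-one}. Since $x_n^2\equiv1\pmod{y_n}$, for odd $k$ one has $y_{kn}/y_n\equiv k\pmod{y_n}$. So take $k=2U+1+2ty_n>0$ and $X=(2U+1-y_{kn}/y_n)/y_n$; this gives $Y(2U+1-XY)=y_{kn}$ and $3y_{kn}^2+1=x_{kn}^2$.

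The genuine gap is Step~1, on which everything rests. Your claim that $2+\sqrt3$ is a fundamental unit of $\Q(\zeta_{12})$ up to roots of unity is false. Since $12$ is not a prime power, the Hasse unit index is $2$; explicitly $(1-\zeta_{12})^2=-\zeta_{12}(2-\sqrt3)$, so the units are $\zeta_{12}^k(1-\zeta_{12})^j$. Your planned hand check over roots of unity is therefore run over the wrong group. Your fallback, killing stray solutions by the parity in \eqref{eq:int-test-one}, is also unavailable: Step~2 applies Step~1 to \eqref{eq:int-test-two} as well, and that equation has no parity constraint.

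The conclusion of Step~1 is nevertheless true, and the repair is short. With the correct generators, the relative norms to $\Q(i)$ are $N(\zeta_{12})=-1$ and $N(1-\zeta_{12})=-i$. Hence a unit of relative norm $1$ has $j$ even and equals $\zeta(2+\sqrt3)^n$ with $\zeta^6=1$. For $\zeta=(\pm1\pm i\sqrt3)/2$, the condition $\zeta(a+b\sqrt3)\in\Zi[\sqrt3]$ forces $a,b$ both even, contradicting $a^2-3b^2=1$.

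A cleaner route avoids unit groups entirely. For a solution $\eta=x+y\sqrt3$, the element $\eta/\bar\eta=\eta(\bar x-\bar y\sqrt3)\in\Zi[\sqrt3]$ has all four conjugates of absolute value $1$. By Kronecker it is a root of unity in $\Zi[\sqrt3]$, so it is $\pm1$ or $\pm i$. Relative norm $1$ excludes $\pm i$. The case $\eta=-\bar\eta$ would give $x=ix'$, $y=iy'$ with $x',y'\in\Z$ and $x'^2-3y'^2=-1$, which is impossible modulo $3$. Hence $\eta=\bar\eta$, i.e.\ $x,y\in\Z$.
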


\begin{lemma}[rationality criterion]\label{lem:rationality}
This is \cite[Theorem~1.2]{MS20} in the case $K=\mathbb Q(i)$. Let $x_1,\ldots,x_n\in\Zi$ and put
\[
y=2\prod_{k=1}^n(3x_k+1).
\]
Then
\[
y+\sum_{k=1}^n\frac{x_k}{y^k}\in\Q
\quad\Longleftrightarrow\quad
x_1,\ldots,x_n\in\Z.
\]
\end{lemma}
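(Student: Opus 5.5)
The direction ``$\Leftarrow$'' is immediate. If all $x_k\in\Z$, then $3x_k+1\ne0$ because $3\nmid -1$. Hence $y$ is a nonzero rational integer, and $y+\sum_k x_k/y^k\in\Q$. The work lies in ``$\Rightarrow$''. There I plan two steps: first a size argument showing $y\in\Z$, then a base-$y$ digit argument showing that the imaginary parts of the $x_k$ vanish.

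\emph{Preliminary bounds.} Each factor $3x_k+1$ is a nonzero Gaussian integer, so $|3x_k+1|\ge1$ and $|y|\ge 2$. If $x_k\ne0$, then $|x_k|\ge1$, so $|3x_k+1|\ge 3|x_k|-1\ge 2|x_k|$. Since every other factor of $y$ has modulus at least $1$, this gives $|y|\ge 2|3x_k+1|\ge 4|x_k|$. The same bound $|x_k|\le |y|/4$ holds trivially when $x_k=0$. Consequently
\[
\Bigl|\sum_{k=1}^n \frac{x_k}{y^k}\Bigr|\le \sum_{k=1}^n \frac{|y|/4}{|y|^k}\le \frac14\sum_{k\ge1} 2^{-(k-1)}=\frac12 .
\]
I will need a strict inequality $<\tfrac12$ here. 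It comes from the finiteness of the sum, since $\sum_{k=1}^n 2^{-(k-1)}<2$; alternatively I will sharpen the bound $|y|\ge 4|x_k|$ when $n\ge2$.

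\emph{Step 1: $y\in\Z$.} Let $r=y+\sum_k x_k/y^k\in\Q$ and apply complex conjugation $\sigma$, which fixes $r$. Then $|y-r|<\tfrac12$ and $|\sigma(y)-r|<\tfrac12$, so $2|\operatorname{Im} y|=|y-\sigma(y)|<1$. Since $\operatorname{Im}y\in\Z$, we get $\operatorname{Im}y=0$, that is, $y\in\Z$ with $|y|\ge2$.

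\emph{Step 2: each $x_k\in\Z$.} Since $y$ and $r$ are now real, $\sum_k x_k/y^k$ is rational, and taking imaginary parts gives $\sum_k \operatorname{Im}(x_k)\,y^{-k}=0$. Multiplying by $y^n$ yields the integer identity
\[
\sum_{k=1}^n \operatorname{Im}(x_k)\,y^{\,n-k}=0 .
\]
Every term except the one with $k=n$ is divisible by $y$, so $y\mid \operatorname{Im}(x_n)$. But $|\operatorname{Im}(x_n)|\le|x_n|\le |y|/4<|y|$, so $\operatorname{Im}(x_n)=0$. Dividing the identity by $y$ and repeating the argument gives $\operatorname{Im}(x_{n-1})=0$, and so on down to $x_1$. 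This is uniqueness of base-$y$ expansions with small digits. Hence all $x_k\in\Z$.

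The main obstacle I expect is Step 1. The estimate must give the strict bound $|\operatorname{Im} y|<1$, so the bound $|x_k|\le|y|/4$ and the geometric tail must be handled carefully, including the degenerate cases $x_k=0$ and small $n$. Step 2 is routine once $y$ is known to be a rational integer.
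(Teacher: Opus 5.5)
Your proof is correct. The paper gives no argument of its own for this lemma: it imports it as \cite[Theorem~1.2]{MS20}, specialized to $K=\Q(i)$. So your proposal is a genuinely different, self-contained route rather than a reconstruction of the paper's.

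Your key estimate $|x_k|\le |y|/4$ does double duty. It comes from $|3x_k+1|\ge 2|x_k|$ for $x_k\ne0$, together with $|3x_j+1|\ge1$ for the remaining factors. First, it makes the tail $\sum_k x_k/y^k$ smaller than $\frac12$ in modulus, which pins $y$ to the real axis. Second, it makes the imaginary parts of the $x_k$ base-$y$ digits of absolute value less than $|y|$, and their vanishing then follows from the integer identity $\sum_k \operatorname{Im}(x_k)\,y^{n-k}=0$.

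Your approach buys independence from the external reference. It also explains the normalization $y=2\prod_k(3x_k+1)$: the factor $2$ gives $|y|\ge2$ for the geometric bound, and the shift $3x_k+1$ keeps $y\ne0$ while making the digits small. The citation buys brevity and the generality in $K$ of the original theorem. Your argument, however, uses only elementary archimedean facts about $\Zi$ and would adapt to other imaginary quadratic fields with minor changes.

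One simplification: the strictness you worry about in Step 1 is not needed. Since $|\operatorname{Im}y|=|\operatorname{Im}(y-r)|\le|y-r|\le\frac12$ and $\operatorname{Im}y\in\Z$, this already forces $\operatorname{Im}y=0$.
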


We also use Sun's fixed-variable form of Matiyasevich's theorem over $\Z$: there exists a recursively enumerable but nonrecursive set $\mathcal A\subseteq\N$ and a polynomial
\[
P(Z_0,Z_1,\ldots,Z_{10})\in\Z[Z_0,Z_1,\ldots,Z_{10}]
\]
such that, for every $a\in\N$,
\begin{equation}\label{eq:sun}
a\in\mathcal A
\quad\Longleftrightarrow\quad
\exists z_1,\ldots,z_{10}\in\Z\quad
\bigl(P(a,z_1,\ldots,z_{10})=0\ \wedge\ z_{10}\ne0\bigr).
\end{equation}
This is the precise fixed-variable input used in \cite[proof of Theorem~1.1]{MS20}, based on \cite[Theorem~1.1(ii)]{Sun2021}.

We need one elementary observation which replaces the two-variable nonzero encoding used in the 19-variable version.

\begin{lemma}[one-variable nonzero-divisibility gadget]\label{lem:gadget}
For $R\in\Zi$, put
\[
Q(R)=(2R+1)(3R+1).
\]
Then $Q(R)\ne0$ for every $R\in\Zi$.  Moreover, for every nonzero integer $n\in\Z$, there exists $R\in\Z$ such that
\[
n\mid Q(R)
\]
in $\Z$, and hence also in $\Zi$.
\end{lemma}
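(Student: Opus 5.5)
The plan has two independent parts: show that $Q$ has no Gaussian-integer zero, and then construct $R$ for each nonzero $n$ by the Chinese remainder theorem.

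\emph{Nonvanishing.} Since $\Zi$ is an integral domain, $Q(R)=0$ forces $2R+1=0$ or $3R+1=0$. That would give $R=-1/2$ or $R=-1/3$. Neither is a Gaussian integer, because the real part would not be a rational integer. So $Q(R)\ne0$ for every $R\in\Zi$.

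\emph{Divisibility.} Let $n\in\Z$ be nonzero. We may replace $n$ by $|n|$, so assume $n\ge1$, and write $n=2^a3^bm$ with $\gcd(m,6)=1$.
\begin{itemize}
\item \emph{Powers of $2$.} Since $2$ is invertible modulo $3^b m$, and in particular $\gcd(3,2^a)=1$, we can choose $R_2$ with $3R_2+1\equiv0 \pmod{2^a}$.
\item \emph{Powers of $3$.} Since $2$ is invertible modulo $3^b$, we can choose $R_3$ with $2R_3+1\equiv 0\pmod{3^b}$.
\item \emph{The part coprime to $6$.} We can choose $R_m$ with $2R_m+1\equiv0\pmod m$; alternatively we could use $3R_m+1$, since both $2$ and $3$ are invertible modulo $m$.
\end{itemize}
The moduli $2^a$, $3^b$ and $m$ are pairwise coprime, so the Chinese remainder theorem gives $R\in\Z$ congruent to $R_2$, $R_3$ and $R_m$ modulo the respective moduli. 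Then $2^a$ divides the factor $3R+1$, while $3^b$ and $m$ divide the factor $2R+1$. Hence $2^a3^bm$ divides $(2R+1)(3R+1)$, because each prime-power part divides the product. A simpler way to say this is that $n'=3^bm$ divides $2R+1$ and $2^a$ divides $3R+1$, and these two divisors are coprime, so their product $n$ divides $Q(R)$.

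\emph{Passage to $\Zi$.} A divisibility $Q(R)=nk$ in $\Z$ is also a divisibility in $\Zi$. No real obstacle is expected. The only point needing care is to use the factor $3R+1$ for the $2$-part and the factor $2R+1$ for the $3$-part, since $2R+1$ is always odd and $3R+1$ is never divisible by $3$.
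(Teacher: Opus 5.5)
Your proof is correct and follows the paper's argument: nonvanishing via the integral-domain property and real parts, then the Chinese remainder theorem with the $2$-part assigned to $3R+1$ and both the $3$-part and the part coprime to $6$ assigned to $2R+1$. The paper splits the part coprime to $6$ into prime powers, which makes no difference, and your clause ``$2$ is invertible modulo $3^b m$'' in the powers-of-$2$ step is irrelevant, since what is needed there is that $3$ is invertible modulo $2^a$, which you do state.
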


\begin{proof}
If $2R+1=0$ for some $R\in\Zi$, then comparing real parts gives an integer solution of $2a+1=0$, which is impossible.  Similarly, $3R+1=0$ has no solution in $\Zi$.  Hence $Q(R)\ne0$ for all $R\in\Zi$.

Let $n\in\Z\setminus\{0\}$ and put $m=|n|$.  If $m=1$, any $R\in\Z$ works.  Suppose $m>1$, and write
\[
m=2^a3^b\prod_{\ell\in\mathcal P}\ell^{e_\ell},
\]
where every $\ell\in\mathcal P$ is a prime distinct from $2$ and $3$.  Choose congruences as follows:
\[
3R+1\equiv0\pmod {2^a}\quad (a>0),
\]
\[
2R+1\equiv0\pmod {3^b}\quad (b>0),
\]
and, for each $\ell\in\mathcal P$,
\[
2R+1\equiv0\pmod {\ell^{e_\ell}}.
\]
Each displayed congruence is solvable because the relevant coefficient is invertible modulo the indicated prime power.  The moduli with exponent $0$ are omitted.  The remaining moduli are pairwise coprime, so the Chinese remainder theorem gives an integer $R$ satisfying all of them simultaneously.  Then every prime-power divisor of $m$ divides one of the two factors $2R+1$ or $3R+1$, and hence $m\mid Q(R)$.  Thus $n\mid Q(R)$.
\end{proof}

\section{Proof of the main theorem}

\begin{proof}[Proof of Theorem \ref{thm:main}]
Fix the recursively enumerable but nonrecursive set $\mathcal A\subseteq\N$ and the polynomial $P$ in \eqref{eq:sun}.  We shall show that membership in $\mathcal A$ can be represented by one polynomial equation over $\Zi$ in $18$ unknowns.  This will imply the theorem, since a decision algorithm for solvability of arbitrary $18$-variable equations over $\Zi$ would decide membership in $\mathcal A$.

Fix $a\in\N$.  For variables $Z_1,\ldots,Z_{10}\in\Zi$, put
\begin{equation}\label{eq:Y-def}
\calY=2\prod_{k=1}^{10}(3Z_k+1).
\end{equation}
Since $3Z_k+1\ne0$ for every $Z_k\in\Zi$ (writing $Z_k=a+bi$ would otherwise give $3b=0$ and $3a+1=0$), we have $\calY\ne0$.  Define
\begin{equation}\label{eq:T-W-def}
T=\sum_{k=1}^{10} Z_k\calY^{10-k},
\qquad
W=\calY^{11}+T.
\end{equation}
Then
\begin{equation}\label{eq:ratio}
\frac{W}{\calY^{10}}
=
\calY+\sum_{k=1}^{10}\frac{Z_k}{\calY^k}.
\end{equation}

The key point is that we do not introduce an auxiliary integer multiplier.  Instead, we force the two quantities
\[
\calY^{10}\quad\text{and}\quad W
\]
to be rational integers.  If this happens, then \eqref{eq:ratio} belongs to $\Q$, because the denominator $\calY^{10}$ is a nonzero rational integer; Lemma \ref{lem:rationality} then gives
\[
Z_1,\ldots,Z_{10}\in\Z.
\]

We now express the two integer conditions by Lemma \ref{lem:integer-test}.  Let
\[
U_0=\calY^{10},\qquad U_1=W.
\]
For $j=0,1$, introduce variables $V_j,X_j,Y_j\in\Zi$ and set
\begin{equation}\label{eq:E-j}
E_j=4\bigl(2V_j(2(2U_j+1)^2+1)-Y_j\bigr)^2-3Y_j^2-1,
\end{equation}
and
\begin{equation}\label{eq:B-j}
B_j=3Y_j^2(2U_j+1-X_jY_j)^2+1.
\end{equation}
Thus, by Lemma \ref{lem:integer-test}, the condition $U_j\in\Z$ is equivalent to
\[
\exists V_j,X_j,Y_j\in\Zi\quad
\bigl(V_j\ne0\ \wedge\ E_j=0\ \wedge\ B_j\in\Squares\bigr).
\]
The nonzero conditions on $V_0$ and $V_1$ will be enforced below through the single product $Z_{10}V_0V_1$, rather than by separate equations.

We shall combine the two square conditions and the nonzero condition using Lemma \ref{lem:combining}.  Define
\begin{equation}\label{eq:C-def}
C_0=9B_0,
\qquad
C_1=B_1.
\end{equation}
Then, in the quotient $\Zi/(3)$,
\[
C_0\equiv0\pmod 3,
\qquad
C_1\equiv1\pmod 3,
\]
because $B_1=3Y_1^2(2U_1+1-X_1Y_1)^2+1$.  Since the rational prime $3$ is a Gaussian prime, $\Zi/(3)$ is an integral domain and in particular $1\ne0$ there.  Hence $C_0\ne C_1$ in $\Zi$.
Moreover,
\begin{equation}\label{eq:C0-B0-equivalence}
C_0\in\Squares
\quad\Longleftrightarrow\quad
B_0\in\Squares.
\end{equation}
Indeed, if $B_0=\beta^2$, then $C_0=(3\beta)^2$.  Conversely, if $C_0=\eta^2$ for some $\eta\in\Zi$, then
\[
\eta^2=9B_0.
\]
The rational prime $3$ is a Gaussian prime, since $3\equiv3\pmod4$.  Hence $3\mid\eta$, say $\eta=3\eta'$.  Substituting gives
\[
9(\eta')^2=9B_0.
\]
Since $9\ne0$ and $\Zi$ is an integral domain, cancellation gives $B_0=(\eta')^2\in\Squares$.

Put
\[
N=Z_{10}V_0V_1,
\qquad
Q(R)=(2R+1)(3R+1).
\]
The already proved inequality $C_0\ne C_1$ supplies the hypothesis $A_1\ne A_2$ in Lemma \ref{lem:combining}.  Instead of imposing a separate nonzero equation, we impose the single relation-combining equation
\begin{equation}\label{eq:f-combine-18}
f(C_0,C_1,N,Q(R),M)=0.
\end{equation}
This equation will force $N\ne0$ automatically.  Indeed, if $N=0$, then by the definition of $f$ we get
\[
f(C_0,C_1,0,Q(R),M)=Q(R)^4.
\]
By Lemma \ref{lem:gadget}, $Q(R)\ne0$, and since $\Zi$ is an integral domain, $Q(R)^4\ne0$.  Hence \eqref{eq:f-combine-18} cannot hold with $N=0$.

We claim that, for the fixed parameter $a\in\N$, the following finite system of equations in the $18$ unknowns
\[
Z_1,\ldots,Z_{10},\ M,R,\ V_0,X_0,Y_0,\ V_1,X_1,Y_1
\]
has a solution over $\Zi$ if and only if $a\in\mathcal A$:
\begin{align}
P(a,Z_1,\ldots,Z_{10})&=0, \label{eq:system-P}\\
E_0&=0, \label{eq:system-E0}\\
E_1&=0, \label{eq:system-E1}\\
f(C_0,C_1,Z_{10}V_0V_1,Q(R),M)&=0. \label{eq:system-f}
\end{align}
Here $\calY,T,W,U_j,E_j,B_j,C_j$ are the polynomials defined in \eqref{eq:Y-def}--\eqref{eq:C-def}.

First suppose that $a\in\mathcal A$.  By \eqref{eq:sun}, choose $z_1,\ldots,z_{10}\in\Z$ such that
\[
P(a,z_1,\ldots,z_{10})=0,
\qquad
z_{10}\ne0.
\]
Then $\calY\in\Z$ and $W\in\Z$, hence $U_0=\calY^{10}\in\Z$ and $U_1=W\in\Z$.  By the last clause of Lemma \ref{lem:integer-test}, for $j=0,1$ we may choose $V_j,X_j,Y_j\in\Z$ with $V_j\ne0$ such that $E_j=0$ and $B_j\in\Squares$.  Then $C_0,C_1\in\Squares$.  Moreover, since $\Zi$ is an integral domain and $z_{10},V_0,V_1$ are all nonzero,
\[
N=z_{10}V_0V_1\in\Z\setminus\{0\}.
\]
By Lemma \ref{lem:gadget}, choose $R\in\Z$ such that $N\mid Q(R)$.  The hypotheses of Lemma \ref{lem:combining} now hold: $C_0\ne C_1$ and $N\ne0$.  Applying the lemma with
\[
A_1=C_0,
\quad
A_2=C_1,
\quad
S=N,
\quad
T=Q(R),
\]
gives some $M\in\Zi$ satisfying \eqref{eq:system-f}.  Thus the system \eqref{eq:system-P}--\eqref{eq:system-f} has a solution over $\Zi$.

Conversely, suppose that the system \eqref{eq:system-P}--\eqref{eq:system-f} has a solution over $\Zi$.  Put again
\[
N=Z_{10}V_0V_1.
\]
By the observation following \eqref{eq:f-combine-18}, equation \eqref{eq:system-f} implies $N\ne0$.  Since $\Zi$ is an integral domain and $N=Z_{10}V_0V_1$, each factor is nonzero; hence
\[
Z_{10}\ne0,
\qquad
V_0\ne0,
\qquad
V_1\ne0.
\]
Since $C_0\ne C_1$ for every assignment and $N\ne0$, Lemma \ref{lem:combining}, applied to \eqref{eq:system-f} with $S=N$ and $T=Q(R)$, gives
\[
C_0\in\Squares,
\qquad
C_1\in\Squares.
\]
By \eqref{eq:C0-B0-equivalence}, $B_0\in\Squares$, and since $C_1=B_1$, also $B_1\in\Squares$.
Together with $E_0=E_1=0$ and $V_0,V_1\ne0$, Lemma \ref{lem:integer-test} yields
\[
U_0=\calY^{10}\in\Z,
\qquad
U_1=W\in\Z.
\]
As observed above, $\calY\ne0$.  Hence $\calY^{10}\ne0$; since $\calY^{10}\in\Z$ and $W\in\Z$, the quotient has nonzero rational-integer denominator.  Therefore
\[
\calY+\sum_{k=1}^{10}\frac{Z_k}{\calY^k}
=\frac{W}{\calY^{10}}
\in\Q.
\]
By Lemma \ref{lem:rationality}, applied with $x_k=Z_k$, we conclude that
\[
Z_1,\ldots,Z_{10}\in\Z.
\]
Now \eqref{eq:system-P} and $Z_{10}\ne0$ show, via \eqref{eq:sun}, that $a\in\mathcal A$.

Thus $a\in\mathcal A$ if and only if the system \eqref{eq:system-P}--\eqref{eq:system-f} has a solution over $\Zi$ in the displayed $18$ unknowns.  By Lemma \ref{lem:conjunction}, this finite system can be replaced by a single polynomial equation
\[
F_a(X_1,\ldots,X_{18})=0
\]
with coefficients in $\Z$, without adding variables.  The coefficients remain in $\Z$ because all displayed equations have coefficients in $\Z$ and the operation $X^2+2Y^2$ preserves this coefficient ring.  The construction of $F_a$ from $a$ is effective: substitute the fixed parameter $a$ into the fixed polynomial $P$, write the four displayed equations as polynomials in the $18$ variables, and apply Lemma~\ref{lem:conjunction} finitely many times.

If there were an algorithm deciding solvability over $\Zi$ for arbitrary polynomial equations in $18$ unknowns with integer coefficients, then applying it to $F_a$ would decide whether $a\in\mathcal A$.  This contradicts the nonrecursiveness of $\mathcal A$.  Hence no such algorithm exists.
\end{proof}

\begin{remark}
The first saving comes from replacing the original auxiliary integer multiplier $t$ by the two integer conditions
\[
\calY^{10}\in\Z,
\qquad
\calY^{11}+\sum_{k=1}^{10}Z_k\calY^{10-k}\in\Z.
\]
The second saving comes from replacing the two-variable nonzero equation by the one-variable polynomial $Q(R)=(2R+1)(3R+1)$ inside the relation-combining lemma.  Thus the argument avoids both the old variable $t$ and one of the two variables previously used to force nonvanishing.  Within this particular framework the variable count is
\[
10+3+3+1+1=18,
\]
coming respectively from Sun's ten variables, the two three-variable integer tests, the combining variable $M$, and the one-variable gadget $R$.  No absolute optimality statement is claimed here; lowering the count to $17$ would require an additional improvement, such as a smaller integer test, a shared witness for the two integer conditions, a stronger fixed-variable input, or a different combining mechanism.
\end{remark}

\end{document}